\documentclass{amsart}
\usepackage{amsfonts}

\setcounter{MaxMatrixCols}{10}

\newtheorem{theorem}{Theorem}
\theoremstyle{plain}

\newtheorem{corollary}{Corollary}

\newtheorem{definition}{Definition}

\newtheorem{lemma}{Lemma}

\newtheorem{remark}{Remark}

\numberwithin{equation}{section}
\input{tcilatex}

\begin{document}
\title{Inequalities for a number of different types of convexity}
\author{Merve Avci Ardic}
\address{Adiyaman University, Faculty of Science and Arts, Department of
Mathematics, 02040, Adiyaman}
\email{mavci@adiyaman.edu.tr}
\subjclass{26D10; 26D15}
\keywords{$s-$convex function, convex function, $quasi-$convex function, $P-$%
function, $Q(I)$ class function.}

\begin{abstract}
Some inequalities for different types of convexity are established.
\end{abstract}

\maketitle

\section{introduction}

In this section, some definitions of different types of convexity will be
reminded.

In \cite{1}, Hudzik and Maligranda considered among others the class of
functions which are s-convex in the second sense.

\begin{definition}
\label{def 1.1} A function $f:%
\mathbb{R}
^{+}\rightarrow 
\mathbb{R}
,$ where $%
\mathbb{R}
^{+}=[0,\infty ),$ is said to be $s-$convex in the second sense if%
\begin{equation*}
f(\alpha u+\beta v)\leq \alpha ^{s}f(u)+\beta ^{s}f(v)
\end{equation*}%
for all $u,v\in \lbrack 0,\infty ),$ $\alpha ,\beta \geq 0$ with $\alpha
+\beta =1$ and $s$ fixed in $(0,1].$ This class of s-convex functions in the
second sense is usually denoted by $K_{s}^{2}.$
\end{definition}

\ $s-$convexity reduces the ordinary convexity of functions defined on $%
[0,\infty )$ for $s=1.$

For some information about convexity and $s-$convexity it is possible to
refer to \cite{1}-\cite{7}.

\begin{definition}
\label{def 1.2} \cite{8} A function $f:I\rightarrow 
\mathbb{R}
$ is said to be $quasi-$convex if for all $x,y\in I$ and all $\alpha \in
\lbrack 0,1],f(\alpha x+(1-\alpha )y)\leq \max (f(x),f(y)).$
\end{definition}

Godunova and Levin introduced the following concept in the paper \cite{9}.

\begin{definition}
\label{def 1.3} A function $f:I\rightarrow 
\mathbb{R}
$ is said to be belong to the class $Q(I)$ if it is nonnegative and for all $%
x,y\in I$ and $\lambda \in (0,1),$ satisfies the inequality%
\begin{equation*}
f(\lambda x+(1-\lambda )y)\leq \frac{f(x)}{\lambda }+\frac{f(y)}{1-\lambda }.
\end{equation*}
\end{definition}

\begin{definition}
\label{def 1.4} \cite{10} A function $f:I\rightarrow 
\mathbb{R}
$ is said to be belong to the class $P(I)$ if it is nonnegative and for all $%
x,y\in I$ and $\lambda \in \lbrack 0,1],$ satisfies the following inequality%
\begin{equation*}
f(\lambda x+(1-\lambda )y)\leq f(x)+f(y).
\end{equation*}
\end{definition}

For some results, generalizations and applications about quasi-convex
functions, $P-$convex functions and $Q(I)$ class functions see \cite{8}-\cite%
{17}.\ \ \ \ \ 

The main aim of this paper is establish some integral inequalities for
different types of convexity via Lemma \ref{lem 2.1}.\ \ \ \ 

\section{\ main results}

The results are obtained via following lemma.

\begin{lemma}
\label{lem 2.1} Let $f:[a,b]\rightarrow 
\mathbb{R}
$ be a continuous function on $[a,b]$ such that $f\in L[a,b]$ with $a<b.$
For some fixed $p,q>0,$ following equality holds: 
\begin{eqnarray*}
&&\int_{a}^{b}\left( x-a\right) ^{p}\left( b-x\right) ^{q}f(x)f(a+b-x)dx \\
&=&\left( b-a\right) ^{p+q+1}\int_{0}^{1}\left( 1-t\right)
^{p}t^{q}f(ta+\left( 1-t\right) b)f\left( \left( 1-t\right) a+tb\right) dt.
\end{eqnarray*}
\end{lemma}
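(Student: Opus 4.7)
The plan is to prove the identity by a single linear change of variables on the left-hand side. Concretely, I would set $x = ta + (1-t)b$, so that $t$ runs over $[0,1]$ in the opposite orientation to $x$ over $[a,b]$.

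The key computations I would carry out in order are the following. First, express the three factors depending on $x$ in terms of $t$: one checks that $x - a = (1-t)(b-a)$, hence $(x-a)^p = (1-t)^p (b-a)^p$; that $b - x = t(b-a)$, hence $(b-x)^q = t^q (b-a)^q$; and that $a+b-x = (1-t)a + tb$, so that the argument of the second occurrence of $f$ transforms correctly. Next, observe that $dx = -(b-a)\,dt$, and that the limits $x=a,b$ correspond to $t=1,0$ respectively, so the minus sign from $dx$ is absorbed by reversing the limits of integration. Finally, collect the factor $(b-a)^{p+q+1}$ outside the integral; the $f(x)$ factor becomes $f(ta+(1-t)b)$, and the $f(a+b-x)$ factor becomes $f((1-t)a+tb)$, yielding exactly the right-hand side.

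There is essentially no obstacle here: the result is a direct consequence of an affine substitution, and the hypotheses that $f$ is continuous on $[a,b]$ (in particular bounded) together with $f\in L[a,b]$ ensure that the integrals involved are well defined and the change of variables is legitimate. The only point that requires a moment of care is keeping track of the orientation swap when passing from $x\in[a,b]$ to $t\in[0,1]$, since under this substitution $x=a$ corresponds to $t=1$ rather than $t=0$; otherwise the identification of the integrands is a matter of routine algebra.
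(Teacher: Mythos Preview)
Your proof is correct and follows exactly the same approach as the paper, which also proceeds by the substitution $x = ta + (1-t)b$ and elementary computation. You have simply spelled out the details (the expressions for $x-a$, $b-x$, $a+b-x$, and $dx$, and the orientation swap of the limits) that the paper leaves implicit.
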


First result is about $s-$convex functions.

\begin{proof}
Changing the variable $x=ta+(1-t)b$ and simple calculations proceed the
required result.
\end{proof}

\begin{theorem}
\label{teo 2.1} Let $f:[a,b]\subset \lbrack 0,\infty )\rightarrow 
\mathbb{R}
^{+}$ be a continuous function on $[a,b]$ such that $f\in L[a,b]$ with 0$%
\leq a<b<\infty .$ If $f$ is $s-$convex in the second sense, for some fixed $%
p,q>0$ and $s\in (0,1]$ the following inequality holds 
\begin{eqnarray*}
&&\int_{a}^{b}\left( x-a\right) ^{p}\left( b-x\right) ^{q}f(x)f(a+b-x)dx \\
&\leq &\frac{\left( b-a\right) ^{p+q+1}}{2}\left\{ \left(
f^{2}(a)+f^{2}(b)\right) \left[ \beta (p+1,2s+q+1)+\beta (q+1,2s+p+1)\right]
\right. \\
&&+\left. 4f(a)f(b)\beta (p+s+1,q+s+1)\right\}
\end{eqnarray*}%
where $\beta \left( m,n\right) =\int_{0}^{1}t^{m-1}\left( 1-t\right)
^{n-1}dt,$ $m,n>0$ is the Euler Beta function.
\end{theorem}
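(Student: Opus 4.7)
The plan is to start by invoking Lemma \ref{lem 2.1} to rewrite the left-hand side as
$$
(b-a)^{p+q+1}\int_{0}^{1}(1-t)^{p}t^{q}\,f(ta+(1-t)b)\,f((1-t)a+tb)\,dt,
$$
so that everything is expressed over $[0,1]$ with the classical weight $(1-t)^{p}t^{q}$, which is exactly the Beta-function kernel. After that, the entire task reduces to bounding the integrand pointwise in $t$ and recognizing the resulting integrals as values of $\beta$.

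The natural temptation is to apply $s$-convexity directly to each factor and multiply, but a direct expansion produces cross terms of the form $t^{2s}f(a)f(b)$ with coefficient $1$ rather than $4$, which does not match the target inequality. The key trick I would use instead is to first neutralize the product by the elementary AM--GM estimate $2AB\leq A^{2}+B^{2}$, yielding
$$
2\,f(ta+(1-t)b)\,f((1-t)a+tb)\leq f^{2}(ta+(1-t)b)+f^{2}((1-t)a+tb),
$$
and only then apply $s$-convexity inside each square. Using Definition \ref{def 1.1} and squaring gives
$$
f^{2}(ta+(1-t)b)\leq t^{2s}f^{2}(a)+2t^{s}(1-t)^{s}f(a)f(b)+(1-t)^{2s}f^{2}(b),
$$
and symmetrically for the second square. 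Adding the two bounds, the pure $f(a)f(b)$ contribution doubles to $4t^{s}(1-t)^{s}f(a)f(b)$, while the $f^{2}(a)$ and $f^{2}(b)$ contributions combine into $(t^{2s}+(1-t)^{2s})\bigl(f^{2}(a)+f^{2}(b)\bigr)$.

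Multiplying this pointwise bound by the weight $(1-t)^{p}t^{q}$ and integrating from $0$ to $1$, I would then identify the three resulting integrals as Beta functions via
$$
\int_{0}^{1}(1-t)^{p+2s}t^{q}\,dt=\beta(q+1,2s+p+1),\quad \int_{0}^{1}(1-t)^{p}t^{q+2s}\,dt=\beta(p+1,2s+q+1),
$$
$$
\int_{0}^{1}(1-t)^{p+s}t^{q+s}\,dt=\beta(p+s+1,q+s+1).
$$
Dividing by $2$ to account for the AM--GM factor and multiplying by $(b-a)^{p+q+1}$ from the lemma yields exactly the claimed inequality.

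The only subtlety I anticipate is remembering to apply AM--GM before invoking $s$-convexity; without this preliminary symmetrization one obtains a structurally different bound whose Beta-function parameters do not line up with the stated constants. The rest is bookkeeping with the Beta integral, which requires no essentially new idea.
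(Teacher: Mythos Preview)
Your proposal is correct and follows essentially the same route as the paper: invoke Lemma \ref{lem 2.1}, apply the elementary inequality $cd\leq\tfrac12(c^{2}+d^{2})$ to the product $f(ta+(1-t)b)\,f((1-t)a+tb)$, bound each square via $s$-convexity, expand, collect terms, and identify the three resulting integrals as Beta values. Your remark about the ordering (AM--GM first, then $s$-convexity) is exactly the mechanism the paper uses to obtain the stated coefficients.
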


\begin{proof}
Using the inequality $cd\leq \frac{1}{2}[c^{2}+d^{2}]$ $c,d\in 
\mathbb{R}
^{+}$ in the right hand side in Lemma \ref{lem 2.1}, we obtain%
\begin{eqnarray*}
&&\int_{a}^{b}\left( x-a\right) ^{p}\left( b-x\right) ^{q}f(x)f(a+b-x)dx \\
&\leq &\frac{\left( b-a\right) ^{p+q+1}}{2}\int_{0}^{1}\left( 1-t\right)
^{p}t^{q}\left\{ \left[ f(ta+\left( 1-t\right) b)\right] ^{2}+\left[ f\left(
\left( 1-t\right) a+tb\right) \right] ^{2}\right\} dt.
\end{eqnarray*}%
Since $f$ is $s-$convex in the second sense, we can write%
\begin{eqnarray*}
&&\int_{a}^{b}\left( x-a\right) ^{p}\left( b-x\right) ^{q}f(x)f(a+b-x)dx \\
&\leq &\frac{\left( b-a\right) ^{p+q+1}}{2}\int_{0}^{1}\left( 1-t\right)
^{p}t^{q}\left\{ \left[ t^{s}f(a)+(1-t)^{s}f(b)\right] ^{2}+\left[
(1-t)^{s}f(a)+t^{s}f(b)\right] ^{2}\right\} dt \\
&=&\frac{\left( b-a\right) ^{p+q+1}}{2}\int_{0}^{1}\left( 1-t\right)
^{p}t^{q}\left\{ \left( f^{2}(a)+f^{2}(b)\right) \left( t^{2s}+\left(
1-t\right) ^{2s}\right) +4t^{s}(1-t)^{s}f(a)f(b)\right\} \\
&=&\frac{\left( b-a\right) ^{p+q+1}}{2}\left\{ \left(
f^{2}(a)+f^{2}(b)\right) \left[ \int_{0}^{1}\left( 1-t\right)
^{p}t^{q+2s}dt+\int_{0}^{1}\left( 1-t\right) ^{p+2s}t^{q}dt\right] \right. \\
&&+\left. 4f(a)f(b)\int_{0}^{1}t^{q+s}(1-t)^{p+s}dt\right\} .
\end{eqnarray*}%
If we use the following equalities above we get the required result:%
\begin{equation*}
\int_{0}^{1}\left( 1-t\right) ^{p}t^{q+2s}dt=\beta (p+1,2s+q+1),
\end{equation*}%
\begin{equation*}
\int_{0}^{1}\left( 1-t\right) ^{p+2s}t^{q}dt=\beta (q+1,2s+p+1)
\end{equation*}%
and%
\begin{equation*}
\int_{0}^{1}t^{q+s}(1-t)^{p+s}dt=\beta (p+s+1,q+s+1).
\end{equation*}
\end{proof}

\begin{corollary}
\label{co 2.1} In Theorem \ref{teo 2.1}, if $p=q$ following inequality holds:%
\begin{eqnarray*}
&&\int_{a}^{b}\left( x-a\right) ^{p}\left( b-x\right) ^{p}f(x)f(a+b-x)dx \\
&\leq &\frac{\left( b-a\right) ^{2p+1}}{2}\left\{ \left(
f^{2}(a)+f^{2}(b)\right) \left[ \beta (p+1,2s+p+1)+\beta (p+1,2s+p+1)\right]
\right. \\
&&+\left. 4f(a)f(b)\beta (p+s+1,p+s+1)\right\} .
\end{eqnarray*}
\end{corollary}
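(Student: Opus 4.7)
The plan is to obtain the corollary as an immediate specialization of Theorem \ref{teo 2.1} by setting $q=p$. Since the theorem was established for arbitrary fixed $p,q>0$, the case $p=q>0$ is simply an admissible choice of parameters, and no new analytic argument is required beyond substitution into the already established inequality.

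Concretely, I would carry out three substitutions in the conclusion of Theorem \ref{teo 2.1}. First, the exponent on $(b-a)$ changes from $p+q+1$ to $2p+1$, and the weight $(x-a)^{p}(b-x)^{q}$ in the integrand becomes $(x-a)^{p}(b-x)^{p}$. Second, each of the Beta terms $\beta(p+1,2s+q+1)$ and $\beta(q+1,2s+p+1)$ collapses to $\beta(p+1,2s+p+1)$, producing the bracketed sum in the stated form. Third, $\beta(p+s+1,q+s+1)$ becomes $\beta(p+s+1,p+s+1)$.

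Since the derivation is purely notational, there is no real obstacle; the only editorial comment I would make is that the sum $\beta(p+1,2s+p+1)+\beta(p+1,2s+p+1)$ could be written more compactly as $2\beta(p+1,2s+p+1)$, but I would keep the displayed form to match the layout inherited from Theorem \ref{teo 2.1}. Thus the proof reduces to one sentence invoking the theorem with $q=p$.
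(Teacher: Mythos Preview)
Your proposal is correct and matches the paper's approach: the corollary is stated without proof in the paper, being an immediate specialization of Theorem \ref{teo 2.1} obtained by setting $q=p$. Your substitution of the three places where $q$ appears (the exponent $p+q+1$, the two Beta terms in the bracket, and the final Beta term) is exactly what is needed, and your remark that the bracket could be simplified to $2\beta(p+1,2s+p+1)$ is apt but, as you note, the paper leaves it in the unsimplified form.
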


\begin{corollary}
\label{co 2.2} In Theorem \ref{teo 2.1}, if $f$ is symmetric function, $%
f(x)=f(a+b-x),$ following inequality holds:%
\begin{eqnarray*}
&&\int_{a}^{b}\left( x-a\right) ^{p}\left( b-x\right) ^{q}f^{2}(x)dx \\
&\leq &\frac{\left( b-a\right) ^{p+q+1}}{2}\left\{ \left(
f^{2}(a)+f^{2}(b)\right) \left[ \beta (p+1,2s+q+1)+\beta (q+1,2s+p+1)\right]
\right. \\
&&+\left. 4f(a)f(b)\beta (p+s+1,q+s+1)\right\} .
\end{eqnarray*}
\end{corollary}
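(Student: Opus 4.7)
The plan is to apply Theorem \ref{teo 2.1} directly and exploit the symmetry hypothesis to rewrite the integrand on the left-hand side. Since $f:[a,b]\subset[0,\infty)\rightarrow \mathbb{R}^{+}$ is continuous, belongs to $L[a,b]$, and is $s$-convex in the second sense, all the hypotheses of Theorem \ref{teo 2.1} are in force, so its conclusion is available as a black box.

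First I would simply substitute $f(a+b-x)=f(x)$ into the integrand on the left-hand side of Theorem \ref{teo 2.1}. This immediately converts
\begin{equation*}
\int_{a}^{b}\left( x-a\right) ^{p}\left( b-x\right) ^{q}f(x)f(a+b-x)\,dx
\end{equation*}
into
\begin{equation*}
\int_{a}^{b}\left( x-a\right) ^{p}\left( b-x\right) ^{q}f^{2}(x)\,dx,
\end{equation*}
while the right-hand side of Theorem \ref{teo 2.1} depends only on $f(a)$, $f(b)$, the parameters $p,q,s$, and Beta-function values, none of which are affected by the symmetry assumption. The asserted inequality then follows verbatim from Theorem \ref{teo 2.1}.

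There is really no technical obstacle to overcome; the corollary is obtained by a one-line substitution in the conclusion of Theorem \ref{teo 2.1}. The only point worth flagging for the reader is that the symmetry condition $f(x)=f(a+b-x)$ is used solely to identify $f(x)f(a+b-x)$ with $f^{2}(x)$, and is not needed anywhere on the right-hand side of the bound.
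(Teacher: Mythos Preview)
Your proof is correct and matches the paper's approach exactly: the paper states the corollary without an explicit proof, since it follows immediately from Theorem \ref{teo 2.1} by the one-line substitution $f(a+b-x)=f(x)$ in the left-hand integrand, precisely as you describe.
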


\begin{corollary}
\label{co 2.3} In Theorem \ref{teo 2.1}, if we choose $s=1,$ following
inequality holds 
\begin{eqnarray*}
&&\int_{a}^{b}\left( x-a\right) ^{p}\left( b-x\right) ^{q}f(x)f(a+b-x)dx \\
&\leq &\frac{\left( b-a\right) ^{p+q+1}}{2}\left\{ \left(
f^{2}(a)+f^{2}(b)\right) \left[ \beta (p+1,q+3)+\beta (q+1,p+3)\right]
\right. \\
&&+\left. 4f(a)f(b)\beta (p+2,q+2)\right\}
\end{eqnarray*}%
for convex functions.
\end{corollary}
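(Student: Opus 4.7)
The plan is to obtain Corollary \ref{co 2.3} as an immediate specialization of Theorem \ref{teo 2.1} to the parameter value $s=1$. First I would invoke the remark recorded just after Definition \ref{def 1.1}, which says that $s$-convexity in the second sense reduces to ordinary convexity on $[0,\infty)$ when $s=1$. Consequently, any convex function $f:[a,b]\subset[0,\infty)\to\mathbb{R}^{+}$ satisfying the remaining continuity and integrability hypotheses automatically belongs to $K_{1}^{2}$, so Theorem \ref{teo 2.1} applies verbatim with $s=1$.

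With that application in hand, the only remaining task is to evaluate the three Beta-function indices that appear in the conclusion of Theorem \ref{teo 2.1} at $s=1$. Substituting, one finds $\beta(p+1,2s+q+1)=\beta(p+1,q+3)$, $\beta(q+1,2s+p+1)=\beta(q+1,p+3)$, and $\beta(p+s+1,q+s+1)=\beta(p+2,q+2)$. The prefactor $(b-a)^{p+q+1}/2$ and the coefficients $f^{2}(a)+f^{2}(b)$ and $4f(a)f(b)$ are independent of $s$, so they carry over unchanged, yielding precisely the asserted inequality.

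There is no genuine obstacle in this argument: no new integration, no new estimation, and no new algebraic manipulation beyond direct substitution of $s=1$ into the three Beta-function arguments. The entire content of the corollary is already encoded in Theorem \ref{teo 2.1}; the only care needed is the bookkeeping of the shifted indices, and a brief sentence pointing out that $s=1$ converts $s$-convexity in the second sense into ordinary convexity provides the conceptual bridge that lets us state the result for convex functions rather than merely for $K_{1}^{2}$.
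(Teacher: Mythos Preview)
Your proposal is correct and matches the paper's approach exactly: the corollary is stated in the paper without a separate proof, as it follows immediately from Theorem \ref{teo 2.1} by setting $s=1$ and using the remark after Definition \ref{def 1.1} that $s$-convexity reduces to ordinary convexity when $s=1$. The index bookkeeping you describe is precisely what the substitution yields.
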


\begin{corollary}
\label{co 2.4} In Corollary \ref{co 2.3}, if we choose $p=q$ and $%
f(x)=f(a+b-x),$ we obtain the following inequalities respectively%
\begin{eqnarray*}
&&\int_{a}^{b}\left( x-a\right) ^{p}\left( b-x\right) ^{p}f(x)f(a+b-x)dx \\
&\leq &\left( b-a\right) ^{2p+1}\left\{ \left( f^{2}(a)+f^{2}(b)\right)
\beta (p+1,p+3)\right. \\
&&+\left. 2f(a)f(b)\beta (p+2,p+2)\right\}
\end{eqnarray*}%
and%
\begin{eqnarray*}
&&\int_{a}^{b}\left( x-a\right) ^{p}\left( b-x\right) ^{q}f^{2}(x)dx \\
&\leq &\frac{\left( b-a\right) ^{p+q+1}}{2}\left\{ \left(
f^{2}(a)+f^{2}(b)\right) \left[ \beta (p+1,q+3)+\beta (q+1,p+3)\right]
\right. \\
&&+\left. 4f(a)f(b)\beta (p+2,q+2)\right\}
\end{eqnarray*}%
for convex functions.
\end{corollary}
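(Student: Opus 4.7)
The plan is to derive both inequalities as immediate specializations of Corollary \ref{co 2.3}, which already provides the bound in the convex ($s=1$) case. Since the phrasing ``we obtain the following inequalities respectively'' indicates that the two hypotheses $p=q$ and $f(x)=f(a+b-x)$ are to be applied one at a time rather than jointly, no fresh appeal to Lemma \ref{lem 2.1} or Theorem \ref{teo 2.1} is required; the whole argument reduces to substitution and simplification of the Beta factors.

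First I would set $p=q$ in the conclusion of Corollary \ref{co 2.3}. The two Beta terms in the bracket merge as $\beta(p+1,q+3)+\beta(q+1,p+3)\to 2\beta(p+1,p+3)$, while $\beta(p+2,q+2)\to\beta(p+2,p+2)$ and the prefactor $(b-a)^{p+q+1}$ becomes $(b-a)^{2p+1}$. The factor of $2$ produced by the merged Beta terms cancels the leading $\tfrac{1}{2}$, yielding
\[
(b-a)^{2p+1}\Bigl\{\bigl(f^{2}(a)+f^{2}(b)\bigr)\beta(p+1,p+3)+2f(a)f(b)\beta(p+2,p+2)\Bigr\},
\]
which is the first asserted inequality.

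For the second inequality I would instead impose only the symmetry $f(x)=f(a+b-x)$ in Corollary \ref{co 2.3}. Then the integrand on the left-hand side satisfies $f(x)f(a+b-x)=f^{2}(x)$, so the left integral becomes $\int_{a}^{b}(x-a)^{p}(b-x)^{q}f^{2}(x)\,dx$, while the right-hand side remains untouched; this is exactly the second assertion. The only delicate point in the entire argument is the arithmetic bookkeeping of the $\tfrac{1}{2}$ prefactor against the doubled Beta sum in the first reduction, which is purely mechanical and poses no genuine obstacle.
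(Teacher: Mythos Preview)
Your proposal is correct and matches the paper's intent: Corollary~\ref{co 2.4} is stated without proof there, being an immediate specialization of Corollary~\ref{co 2.3} by the two substitutions you describe. Your bookkeeping of the $\tfrac{1}{2}$ against the doubled Beta term in the $p=q$ case is accurate, and the symmetry reduction for the second inequality is exactly as implied.
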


Following results are about $quasi-$convex functions.

\begin{theorem}
\label{teo 2.2} Let $f:[a,b]\subset \lbrack 0,\infty )\rightarrow 
\mathbb{R}
^{+}$ be a continuous function on $[a,b]$ such that $f\in L[a,b]$ with 0$%
\leq a<b<\infty .$ If $f$ is $quasi-$convex , for some fixed $p,q>0$ the
following inequality holds 
\begin{eqnarray*}
&&\int_{a}^{b}\left( x-a\right) ^{p}\left( b-x\right) ^{q}f(x)f(a+b-x)dx \\
&\leq &\left( b-a\right) ^{p+q+1}\left( \max \left\{ f(a),f(b)\right\}
\right) ^{2}\beta (p+1,q+1)
\end{eqnarray*}%
where $\beta $ is the Euler Beta function.
\end{theorem}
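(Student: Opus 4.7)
The plan is to apply Lemma \ref{lem 2.1} to convert the integral on $[a,b]$ into an integral on $[0,1]$ in the variable $t$, and then exploit the quasi-convexity of $f$ to bound each factor separately.

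First I would rewrite, using Lemma \ref{lem 2.1},
\begin{equation*}
\int_{a}^{b}(x-a)^{p}(b-x)^{q}f(x)f(a+b-x)\,dx = (b-a)^{p+q+1}\int_{0}^{1}(1-t)^{p}t^{q}f(ta+(1-t)b)f((1-t)a+tb)\,dt.
\end{equation*}
Next, since $ta+(1-t)b$ and $(1-t)a+tb$ are both convex combinations of $a$ and $b$ with $t \in [0,1]$, the quasi-convexity of $f$ (Definition \ref{def 1.2}) gives
\begin{equation*}
f(ta+(1-t)b) \leq \max\{f(a),f(b)\} \quad \text{and} \quad f((1-t)a+tb) \leq \max\{f(a),f(b)\}.
\end{equation*}
Since $f$ is nonnegative (it maps into $\mathbb{R}^{+}$), I can multiply these two bounds to obtain
\begin{equation*}
f(ta+(1-t)b)\,f((1-t)a+tb) \leq (\max\{f(a),f(b)\})^{2}.
\end{equation*}

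Substituting this pointwise bound into the integral, the remaining task is to evaluate $\int_{0}^{1}(1-t)^{p}t^{q}\,dt$, which is the Euler Beta function $\beta(q+1,p+1) = \beta(p+1,q+1)$ by symmetry. Combining everything yields the claimed inequality. There is essentially no technical obstacle here: the argument is a direct application of the lemma followed by the defining bound of quasi-convexity, and the routine identification of the beta integral. The only minor point to note is that the two factors being bounded separately require $f \geq 0$, which is given by the hypothesis that $f$ maps into $\mathbb{R}^{+}$.
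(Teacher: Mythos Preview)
Your argument is correct. It differs from the paper's proof in one small but noticeable way: the paper first applies the elementary inequality $cd\le \tfrac{1}{2}(c^{2}+d^{2})$ to the product $f(ta+(1-t)b)\,f((1-t)a+tb)$, obtaining
\[
\frac{(b-a)^{p+q+1}}{2}\int_{0}^{1}(1-t)^{p}t^{q}\Bigl\{[f(ta+(1-t)b)]^{2}+[f((1-t)a+tb)]^{2}\Bigr\}\,dt,
\]
and only then bounds each square by $(\max\{f(a),f(b)\})^{2}$ via quasi-convexity. You instead bound each factor by $\max\{f(a),f(b)\}$ directly and multiply, using nonnegativity. Both routes give exactly the same final inequality; yours is more economical here, while the paper's detour through $cd\le\tfrac12(c^{2}+d^{2})$ is the uniform template it reuses for the $s$-convex, $P$-convex, and $Q(I)$ cases, where the two summands genuinely need to be handled separately.
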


\begin{proof}
Using the inequality $cd\leq \frac{1}{2}[c^{2}+d^{2}]$ $c,d\in 
\mathbb{R}
^{+}$ in the right hand side in Lemma \ref{lem 2.1} and quasi convexity of $%
f,$ we obtain 
\begin{eqnarray*}
&&\int_{a}^{b}\left( x-a\right) ^{p}\left( b-x\right) ^{q}f(x)f(a+b-x)dx \\
&\leq &\frac{\left( b-a\right) ^{p+q+1}}{2}\int_{0}^{1}\left( 1-t\right)
^{p}t^{q}\left\{ \left[ f(ta+\left( 1-t\right) b)\right] ^{2}+\left[ f\left(
\left( 1-t\right) a+tb\right) \right] ^{2}\right\} dt. \\
&\leq &\left( b-a\right) ^{p+q+1}\left( \max \left\{ f(a),f(b)\right\}
\right) ^{2}\int_{0}^{1}\left( 1-t\right) ^{p}t^{q}dt.
\end{eqnarray*}%
The proof is completed.
\end{proof}

\begin{corollary}
\label{co 2.5} In Theorem \ref{teo 2.2},
\end{corollary}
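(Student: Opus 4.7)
The statement of Corollary \ref{co 2.5} is truncated in the excerpt, but its content is readable from the pattern set by Corollaries \ref{co 2.1}--\ref{co 2.4}: one expects a specialization of Theorem \ref{teo 2.2} either to the case $p=q$, to the case of a symmetric function $f(x)=f(a+b-x)$, or to both. The plan, therefore, is simply to substitute these choices into the inequality already proved in Theorem \ref{teo 2.2} and record the resulting expressions.

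First I would set $p=q$ in the inequality of Theorem \ref{teo 2.2}. The factor $(x-a)^p(b-x)^q$ collapses to $[(x-a)(b-x)]^p$, the exponent $p+q+1$ becomes $2p+1$, and the Beta value $\beta(p+1,q+1)$ becomes $\beta(p+1,p+1)$, yielding
\begin{equation*}
\int_a^b \bigl[(x-a)(b-x)\bigr]^{p} f(x)f(a+b-x)\,dx \le (b-a)^{2p+1}\bigl(\max\{f(a),f(b)\}\bigr)^{2}\beta(p+1,p+1).
\end{equation*}
Next I would impose the symmetry condition $f(x)=f(a+b-x)$, so that $f(x)f(a+b-x)=f^{2}(x)$ in the integrand, which immediately gives
\begin{equation*}
\int_a^b (x-a)^{p}(b-x)^{q} f^{2}(x)\,dx \le (b-a)^{p+q+1}\bigl(\max\{f(a),f(b)\}\bigr)^{2}\beta(p+1,q+1).
\end{equation*}
Both specializations follow from Theorem \ref{teo 2.2} without any additional estimation; no new inequality is invoked, the $s$-convexity calculation of the preceding section plays no role, and the Beta function identities used for Theorem \ref{teo 2.1} are not needed either.

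There is no genuine obstacle here: the derivation is one line of substitution in each case. The only small care required is to match the pattern used in Corollaries \ref{co 2.3}--\ref{co 2.4}, namely to state the $p=q$ case and the symmetric case either separately or combined, and to make sure that the Beta arguments are written in the normalized order $\beta(p+1,p+1)$ (since $\beta$ is symmetric this is cosmetic, but it matches the style of the earlier corollaries). The proof itself would simply read: \emph{The inequalities follow directly from Theorem \ref{teo 2.2} by setting $p=q$, respectively by using $f(x)=f(a+b-x)$, in the integrand.}
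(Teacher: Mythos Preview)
Your guess about the content of Corollary~\ref{co 2.5} is off. In the paper the corollary is not a specialization to $p=q$ or to symmetric $f$; its body is the two itemized cases that immediately follow the corollary environment: if $f$ is increasing, then $\max\{f(a),f(b)\}=f(b)$ and the bound becomes $(b-a)^{p+q+1}f^{2}(b)\beta(p+1,q+1)$; if $f$ is decreasing, then $\max\{f(a),f(b)\}=f(a)$ and the bound becomes $(b-a)^{p+q+1}f^{2}(a)\beta(p+1,q+1)$. So the intended simplification targets the $\max$ factor, not the exponents $p,q$ or the integrand symmetry.

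The derivations you wrote are correct consequences of Theorem~\ref{teo 2.2}, but they are not what the paper records here. The paper's proof is, like yours, a one-line substitution: monotonicity of $f$ on $[a,b]$ identifies which endpoint realizes the maximum, and the inequality of Theorem~\ref{teo 2.2} is then copied with that value inserted. To match the paper you should replace your $p=q$ and symmetry cases with the increasing/decreasing dichotomy and the corresponding replacement of $\bigl(\max\{f(a),f(b)\}\bigr)^{2}$ by $f^{2}(b)$ or $f^{2}(a)$.
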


\begin{itemize}
\item If $f$ is increasing, the following inequality holds%
\begin{eqnarray*}
&&\int_{a}^{b}\left( x-a\right) ^{p}\left( b-x\right) ^{q}f(x)f(a+b-x)dx \\
&\leq &\left( b-a\right) ^{p+q+1}f^{2}(b)\beta (p+1,q+1).
\end{eqnarray*}

\item If $f$ is decreasing, the following inequality holds%
\begin{eqnarray*}
&&\int_{a}^{b}\left( x-a\right) ^{p}\left( b-x\right) ^{q}f(x)f(a+b-x)dx \\
&\leq &\left( b-a\right) ^{p+q+1}f^{2}(a)\beta (p+1,q+1).
\end{eqnarray*}
\end{itemize}

Following \ result is about $P-$convexity.

\begin{theorem}
\label{teo 2.3} Let $f:[a,b]\subset \lbrack 0,\infty )\rightarrow 
\mathbb{R}
$ be a continuous function on $[a,b]$ such that $f\in L[a,b]$ with $0\leq
a<b<\infty .$ If $f$ is $P-$convex , for some fixed $p,q>0$ the following
inequality holds 
\begin{eqnarray*}
&&\int_{a}^{b}\left( x-a\right) ^{p}\left( b-x\right) ^{q}f(x)f(a+b-x)dx \\
&\leq &\left( b-a\right) ^{p+q+1}\left\{ f(a)+f(b)\right\} ^{2}\beta
(p+1,q+1)
\end{eqnarray*}%
where $\beta $ is the Euler Beta function.
\end{theorem}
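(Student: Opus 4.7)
The plan is to mimic the strategy used in Theorems \ref{teo 2.1} and \ref{teo 2.2}, but replace the inequality $cd\le\frac12(c^2+d^2)$ (which was needed there to split a product of two different $f$-values) with a direct bound coming from $P$-convexity applied to \emph{each} factor separately. The point is that the $P$-convexity inequality already produces a bound independent of $t$, so no symmetrization is required.

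The first step is to invoke Lemma \ref{lem 2.1} to rewrite
\begin{equation*}
\int_{a}^{b}(x-a)^{p}(b-x)^{q}f(x)f(a+b-x)\,dx
=(b-a)^{p+q+1}\int_{0}^{1}(1-t)^{p}t^{q}\,f(ta+(1-t)b)\,f((1-t)a+tb)\,dt.
\end{equation*}
Next I apply the $P$-convexity of $f$ to each factor in the integrand on the right. Writing $ta+(1-t)b$ and $(1-t)a+tb$ as convex combinations of $a$ and $b$, Definition \ref{def 1.4} gives $f(ta+(1-t)b)\le f(a)+f(b)$ and $f((1-t)a+tb)\le f(a)+f(b)$; multiplying these (both sides are nonnegative since $f$ is $P$-convex, hence nonnegative) yields
\begin{equation*}
f(ta+(1-t)b)\,f((1-t)a+tb)\le\{f(a)+f(b)\}^{2},
\end{equation*}
a bound that is uniform in $t$ and can be pulled outside the integral.

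The final step is to recognize the remaining integral as a Beta function: $\int_{0}^{1}(1-t)^{p}t^{q}\,dt=\beta(q+1,p+1)=\beta(p+1,q+1)$, and to insert this to obtain the stated inequality. There is no real obstacle here: the proof is essentially a one-line estimation after Lemma \ref{lem 2.1}, precisely because the defining $P$-convexity inequality dominates each factor by the same constant $f(a)+f(b)$ without needing the auxiliary $cd\le\frac12(c^2+d^2)$ trick used in the $s$-convex and quasi-convex cases.
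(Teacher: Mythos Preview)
Your proof is correct. The route, however, differs slightly from the paper's. The paper's (very terse) proof states that it first applies the elementary inequality $cd\le\tfrac12(c^{2}+d^{2})$ to the integrand coming from Lemma~\ref{lem 2.1}, and only then invokes $P$-convexity to bound each squared term by $\{f(a)+f(b)\}^{2}$; the factor $\tfrac12$ then cancels against the two identical contributions, giving exactly the stated bound. You instead bound each factor $f(ta+(1-t)b)$ and $f((1-t)a+tb)$ directly by $f(a)+f(b)$ and multiply, skipping the symmetrization step entirely. Your argument is strictly shorter and makes clear that the $cd\le\tfrac12(c^{2}+d^{2})$ device---genuinely needed in Theorems~\ref{teo 2.1} and~\ref{teo 2.4} to separate the cross term---is redundant in the $P$-convex case precisely because the $P$-convexity bound is already independent of $t$. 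Both approaches arrive at the same constant, so nothing is lost or gained in the final inequality; your version simply exposes the mechanism more transparently.
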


\begin{proof}
Using the inequality $cd\leq \frac{1}{2}[c^{2}+d^{2}]$ $c,d\in 
\mathbb{R}
^{+}$ in the right hand side in Lemma \ref{lem 2.1}, $P-$convexity of $f$
and the definition of $\beta $ function, we get the desired result.
\end{proof}

The last theorem is for $Q(I)$ class functions.

\begin{theorem}
\label{teo 2.4} Let $f:[a,b]\subset \lbrack 0,\infty )\rightarrow 
\mathbb{R}
$ be a continuous function on $[a,b]$ such that $f\in L[a,b]$ with $0\leq
a<b<\infty .$ If $f$ belongs to $Q(I)$ class, the following inequality holds%
\begin{eqnarray*}
&&\int_{a}^{b}\left( x-a\right) ^{p}\left( b-x\right) ^{q}f(x)f(a+b-x)dx \\
&\leq &\frac{\left( b-a\right) ^{p+q+1}}{2}\left\{ \left[ f^{2}(a)+f^{2}(b)%
\right] \left( \beta (p+1,q-1)+\beta (p-1,q+1)\right) \right. \\
&&+\left. 4f(a)f(b)\beta (p,q)\right\}
\end{eqnarray*}%
for some fixed $p,q>1$ and $t\in (0,1).$
\end{theorem}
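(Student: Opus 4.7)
The plan is to mirror the strategy used for Theorem \ref{teo 2.1}, namely: first convert the left-hand side via Lemma \ref{lem 2.1}, then use the elementary inequality $cd \leq \tfrac{1}{2}(c^{2}+d^{2})$ to decouple the product $f(ta+(1-t)b)f((1-t)a+tb)$, and finally apply the defining inequality of the $Q(I)$ class to each squared term. Because the $Q(I)$ bound introduces denominators $t$ and $1-t$, the resulting exponents on $t$ and $1-t$ will drop by $2$ or $1$, which explains why the hypothesis requires $p,q>1$ (and why $\beta(p-1,q+1)$ and $\beta(p+1,q-1)$ appear).

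More concretely, after Lemma \ref{lem 2.1} and the $cd \leq \tfrac{1}{2}(c^2+d^2)$ step, I will use
\[
f(ta+(1-t)b) \leq \frac{f(a)}{t}+\frac{f(b)}{1-t}, \qquad f((1-t)a+tb) \leq \frac{f(a)}{1-t}+\frac{f(b)}{t},
\]
square both, and add. The sum collapses to
\[
\bigl(f^{2}(a)+f^{2}(b)\bigr)\!\left(\frac{1}{t^{2}}+\frac{1}{(1-t)^{2}}\right)+\frac{4f(a)f(b)}{t(1-t)},
\]
where the two cross terms (which are equal) combine to give the coefficient $4$.

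Multiplying this expression by $(1-t)^{p}t^{q}$ and integrating over $(0,1)$ produces precisely the three Beta integrals needed: $(1-t)^{p}t^{q-2}$ integrates to $\beta(p+1,q-1)$, $(1-t)^{p-2}t^{q}$ integrates to $\beta(p-1,q+1)$, and $(1-t)^{p-1}t^{q-1}$ integrates to $\beta(p,q)$. Collecting the factors reproduces the stated bound.

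The main (minor) obstacle is just bookkeeping with the exponents after dividing by $t$, $1-t$, $t^{2}$, $(1-t)^{2}$: one must check that the assumption $p,q>1$ keeps all Beta integrals convergent (the critical cases are $\beta(p+1,q-1)$ and $\beta(p-1,q+1)$, which require $q>1$ and $p>1$ respectively). Beyond this, every step is algebraic and requires no further idea beyond the template already used in the proofs of Theorems \ref{teo 2.1}, \ref{teo 2.2}, and \ref{teo 2.3}.
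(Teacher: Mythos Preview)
Your proposal is correct and follows essentially the same route as the paper: apply Lemma~\ref{lem 2.1}, use $cd\le\tfrac12(c^2+d^2)$, bound each squared term via the $Q(I)$ inequality, expand, and identify the three Beta integrals $\beta(p+1,q-1)$, $\beta(p-1,q+1)$, $\beta(p,q)$. Your remark on why $p,q>1$ is needed for convergence is a nice addition that the paper leaves implicit.
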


\begin{proof}
Using the inequality $cd\leq \frac{1}{2}[c^{2}+d^{2}]$ $c,d\in 
\mathbb{R}
^{+}$ in the right hand side in Lemma \ref{lem 2.1}$,$ we obtain 
\begin{eqnarray*}
&&\int_{a}^{b}\left( x-a\right) ^{p}\left( b-x\right) ^{q}f(x)f(a+b-x)dx \\
&\leq &\frac{\left( b-a\right) ^{p+q+1}}{2}\int_{0}^{1}\left( 1-t\right)
^{p}t^{q}\left\{ \left[ f(ta+\left( 1-t\right) b)\right] ^{2}+\left[ f\left(
\left( 1-t\right) a+tb\right) \right] ^{2}\right\} dt.
\end{eqnarray*}%
Since $f$ belongs to $Q(I),$we can write%
\begin{eqnarray*}
&&\int_{a}^{b}\left( x-a\right) ^{p}\left( b-x\right) ^{q}f(x)f(a+b-x)dx \\
&\leq &\frac{\left( b-a\right) ^{p+q+1}}{2}\int_{0}^{1}\left( 1-t\right)
^{p}t^{q}\left\{ \left[ \frac{f(a)}{t}+\frac{f(b)}{1-t}\right] ^{2}+\left[ 
\frac{f(a)}{1-t}+\frac{f(b)}{t}\right] ^{2}\right\} dt \\
&=&\frac{\left( b-a\right) ^{p+q+1}}{2}\left\{ \left[ f^{2}(a)+f^{2}(b)%
\right] \left( \int_{0}^{1}\left( 1-t\right)
^{p}t^{q-2}dt+\int_{0}^{1}\left( 1-t\right) ^{p-2}t^{q}dt\right) \right. \\
&&+\left. 4f(a)f(b)\int_{0}^{1}\left( 1-t\right) ^{p-1}t^{q-1}dt\right\} .
\end{eqnarray*}%
If we use the following equalities above we get the required result:%
\begin{equation*}
\int_{0}^{1}\left( 1-t\right) ^{p}t^{q-2}dt=\beta (p+1,q-1),
\end{equation*}%
\begin{equation*}
\int_{0}^{1}\left( 1-t\right) ^{p-2}t^{q}dt=\beta (p-1,q+1)
\end{equation*}%
and%
\begin{equation*}
\int_{0}^{1}\left( 1-t\right) ^{p-1}t^{q-1}dt=\beta (p,q).
\end{equation*}
\end{proof}

\begin{corollary}
\label{co 2.6} In Theorem \ref{teo 2.4}, if $f(a)=f(b)$ the following
inequality holds 
\begin{eqnarray*}
&&\int_{a}^{b}\left( x-a\right) ^{p}\left( b-x\right) ^{q}f(x)f(a+b-x)dx \\
&\leq &\left( b-a\right) ^{p+q+1}f^{2}(a)\left\{ \left( \beta
(p+1,q-1)+2\beta (p,q)+\beta (p-1,q+1)\right) \right\}
\end{eqnarray*}%
where $p,q>1$ and $\beta $ is Euler beta function.
\end{corollary}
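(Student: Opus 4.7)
The plan is to obtain Corollary \ref{co 2.6} as an immediate specialization of Theorem \ref{teo 2.4}. Since the hypotheses on $f$, $a$, $b$, $p$, $q$ in the corollary are all already contained in those of the theorem (with the single additional assumption $f(a)=f(b)$), there is nothing new to prove about the validity of the integral bound; only an algebraic simplification of the right-hand side is required.

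First I would invoke Theorem \ref{teo 2.4} verbatim to write
\begin{equation*}
\int_{a}^{b}(x-a)^{p}(b-x)^{q}f(x)f(a+b-x)\,dx \leq \frac{(b-a)^{p+q+1}}{2}\Bigl\{[f^{2}(a)+f^{2}(b)]\bigl(\beta(p+1,q-1)+\beta(p-1,q+1)\bigr)+4f(a)f(b)\beta(p,q)\Bigr\}.
\end{equation*}
Then I would substitute the condition $f(a)=f(b)$ to replace $f^{2}(a)+f^{2}(b)$ by $2f^{2}(a)$ and $4f(a)f(b)$ by $4f^{2}(a)$, factor $2f^{2}(a)$ out of the braces, and cancel the resulting $2$ against the $\tfrac{1}{2}$ in front. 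The outcome is exactly the claimed bound
\begin{equation*}
(b-a)^{p+q+1}f^{2}(a)\bigl\{\beta(p+1,q-1)+2\beta(p,q)+\beta(p-1,q+1)\bigr\}.
\end{equation*}

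There is no genuine obstacle here; the only thing to watch is the bookkeeping of the factor $\tfrac{1}{2}$ and the coefficients $2$ and $4$ so that the cross term $2\beta(p,q)$ (rather than $4\beta(p,q)$ or $\beta(p,q)$) appears in the final expression. Since Theorem \ref{teo 2.4} already assumes $p,q>1$, the corollary inherits that restriction and the Beta integrals $\beta(p-1,q+1)$, $\beta(p,q)$, $\beta(p+1,q-1)$ are all well defined.
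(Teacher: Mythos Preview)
Your proposal is correct and matches the paper's approach: the corollary is stated without a separate proof, so it is obtained exactly as you describe, by substituting $f(a)=f(b)$ into the bound of Theorem~\ref{teo 2.4} and simplifying the coefficients. Your check that the factor $\tfrac12$ cancels against the extracted $2f^{2}(a)$, leaving the cross term $2\beta(p,q)$, is the only bookkeeping needed.
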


\begin{remark}
\label{rem 2.1} One may get some results for other types of convexity via
Lemma \ref{lem 2.1}.
\end{remark}

\end{document}